\newcommand{\bi}{\begin{itemize}}
\newcommand{\ei}{\end{itemize}}
\newcommand{\bn}{\begin{enumerate}}
\newcommand{\en}{\end{enumerate}}
\newcommand{\bq}{\begin{equation}}
\newcommand{\eq}{\end{equation}}
\newcommand{\A}{{\mathbb{A}}}
\newcommand{\C}{{\mathbb{C}}}
\newcommand{\MGLet}{MGL_{\et}}
\newcommand{\MUlog}{MU_{\log}}
\newcommand{\Ge}{\mathbb{G}}
\newcommand{\Pro}{{\mathbb{P}}}
\newcommand{\Z}{{\mathbb{Z}}}
\newcommand{\et}{\mathrm{\acute{e}t}}
\newcommand{\Nis}{\mathrm{Nis}}
\newcommand{\Spec}{\mathrm{Spec}\,}
\newcommand{\colim}{\operatorname*{colim}}
\newcommand{\hocolim}{\operatorname*{hocolim}}
\newcommand{\Hom}{\mathrm{Hom}}
\newcommand{\Map}{\mathrm{Map}}
\newcommand{\Ub}{\mathrm{U}_{\bullet}}
\newcommand{\Xb}{\mathrm{X}_{\bullet}}
\newcommand{\Sm}{\mathbf{Sm}}
\newcommand{\Smc}{\Sm_{\C}}
\newcommand{\SmS}{\Sm_{S}}
\newcommand{\Spc}{\mathbf{Spc}}
\newcommand{\Spcp}{\mathbf{Spc}_{\ast}}
\newcommand{\Spt}{\mathbf{Spt}}
\newcommand{\sS}{\mathbf{sS}}
\newcommand{\Eh}{{\mathcal E}}
\newcommand{\Hh}{{\mathcal H}}
\newcommand{\Hhp}{{\mathcal H}_{\ast}}
\newcommand{\Hhstau}{\Hh_{s,\tau}}
\newcommand{\HhsNis}{\Hh_{s,\Nis}}
\newcommand{\Hhset}{\Hh_{s,\et}}
\newcommand{\Hhpstau}{\Hh_{\bullet s,\tau}}
\newcommand{\SHh}{{\mathcal{SH}}}
\newcommand{\Xh}{\mathcal{X}}
\newcommand{\Yh}{\mathcal{Y}}
\newtheorem{theorem}{Theorem}[section]
\newtheorem{lemma}[theorem]{Lemma}
\newtheorem{prop}[theorem]{Proposition}
\theoremstyle{definition}
\newtheorem{remark}[theorem]{Remark}
\begin{document}

\title{Verdier Hypercovering Theorem for motivic spectra}

\author{Gereon Quick}
\thanks{Both authors were supported in part by the German Research Foundation (DFG) under RO 4754/1-1}
\address{Department of Mathematical Sciences, NTNU Trondheim, Norway}
\email{gereon.quick@math.ntnu.no}

\author{Andreas Rosenschon}
\address{Mathematisches Institut, LMU M\"unchen, Germany}
\email{axr@math.lmu.de}

\date{}


\begin{abstract}
We prove a Verdier Hypercovering Theorem for cohomology theories arising from motivic spectra. This allows us to construct for smooth quasi-projective complex varieties a natural morphism from 
\'etale algebraic to Hodge filtered complex cobordism, which extends the map from \'etale motivic
to Deligne-Beilinson cohomology.
\end{abstract}

\maketitle

\section{Introduction}
Let $X$ be a smooth quasi-projective complex variety, and let $H^m_M(X;\Z(n))$ be the motivic 
cohomology groups, defined as the hypercohomology groups of Bloch's cycle complex, viewed as 
a complex of Zariski sheaves (or equivalently, as the hypercohomology groups of Voevodsky's 
complex $\Z(n)$). Since these complexes are also complexes of \'etale sheaves, we have the 
analogously defined 
\'etale motivic cohomology groups $H^m_L(X;\Z(n))$, together with an evident map $H^m_M(X;\Z(n))
\rightarrow H^m_L(X;\Z(n))$. It is known that with rational coefficients this comparison map is 
an isomorphism; however, with integral coefficients these groups are different in general. For 
example, there is a map $c^{n}_{L,B}: CH^n_L(X)=H^{2n}_L(X;\Z(n))\rightarrow H^{2n}_B(X;\Z(n))$ 
from the \'etale Chow groups to singular cohomology, which is surjective on torsion 
\cite[Theorem 1.1]{rs}. Because of the counterexamples to the integral Hodge conjecture given 
by Atiyah-Hirzebruch \cite{AH}, this implies that $CH^n_L(X)$ contains more elements than the 
usual Chow group $CH^n(X)$, and that $c^n_{L,B}$ cannot arise in the usual fashion as a cycle 
map coming from a cycle on $X$. To give a geometric interpretation of the \'etale motivic 
cohomology groups and to define more general maps from \'etale motivic cohomology to other 
cohomology theories, it has been shown in 
\cite[Theorem 4.2]{rs} that the elements of $H^m_L(X,\Z(n))$ have an interpretation in terms of 
cycles on \'etale covers of $X$; more precisely, there is an isomorphism 
\bq\label{Hcolimintro}
\colim_{\Ub \to X} H^m_M(\Ub; \Z(n)) \xrightarrow{\cong} H^m_L(X; \Z(n))
\eq
where the colimit runs over all \'etale hypercovers of $X$. The proof of this result in 
\cite[\S 4]{rs} uses rather sophisticated techniques and relies on the proof of the 
Beilinson-Lichtenbaum conjecture by Voevodsky \cite{Vo-2} and Rost-Voevodsky \cite{Vo-p}. 

In this note, we first use homotopy-theoretic methods to prove the above type of Verdier 
Hypercovering Theorem in a far more general context for cohomology theories arising 
from motivic spectra:

\begin{theorem}\label{verdierE}
Let $X$ be a smooth quasi-projective scheme over a Noetherian scheme $S$. If 
$E$ is a motivic spectrum over $S$ and $\Ub\rightarrow X$ is an \'etale hypercover, let 
$E^{m,n}(\Ub)$ and $E_{\et}^{m,n}(X)$ be the motivic and the \'etale motivic 
$E$-cohomology groups of $\Ub$ and $X$ respectively. Then there is a natural isomorphism 
\[
\colim_{\Ub \to X} E^{m,n}(\Ub) \overset{\cong}{\to} E_{\et}^{m,n}(X),
\]
where the colimit runs over all \'etale hypercovers of $X$. 
\end{theorem}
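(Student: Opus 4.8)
The plan is to re-express both sides as homotopy groups of derived mapping spectra and to reduce to a spectral form of the Verdier hypercovering theorem. I work in the category $\Spt(\SmS)$ of presheaves of $\Tb$-spectra on $\SmS$, carrying two Bousfield localizations: the motivic one (Nisnevich-local and $\A^{1}$-local), with homotopy category $\SHh(S)$, and the étale motivic one (étale-local and $\A^{1}$-local), with homotopy category $\SHh_{\et}(S)$; let $L_{\et}\colon\SHh(S)\to\SHh_{\et}(S)$ be the localization functor. For a simplicial smooth $S$-scheme $\Ub$ one has $E^{m,n}(\Ub)=[\Sigma^{\infty}\Ub_{+},\Sigma^{m,n}E]$, where $\Sigma^{\infty}\Ub_{+}$ is the homotopy colimit over $\Delta^{\op}$ of the termwise suspension spectra, a definition which unwinds to the usual descent spectral sequence relating $E^{*,n}(\Ub)$ to the $E$-cohomology of the terms of $\Ub$; and $E_{\et}^{m,n}(X)=[\Sigma^{\infty}X_{+},\Sigma^{m,n}L_{\et}E]$. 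Since the category $HR(X)$ of étale hypercovers of $X$, localized at simplicial homotopies, is filtered (Dugger--Hollander--Isaksen), the colimit appearing in the theorem commutes with $\pi_{*}$ of mapping spectra; hence it is enough to construct, for every motivic spectrum $E$ and every $X$ as in the theorem, a natural equivalence of spectra
\[
\hocolim_{\Ub\to X}\map(\Sigma^{\infty}\Ub_{+},E)\ \xrightarrow{\ \simeq\ }\ \map(\Sigma^{\infty}X_{+},L_{\et}E),
\]
for then applying it to $\Sigma^{m,n}E$, taking $\pi_{0}$, and using $L_{\et}\Sigma^{m,n}E\simeq\Sigma^{m,n}L_{\et}E$ recovers the asserted isomorphism.

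To produce this equivalence I would first establish a spectral Verdier hypercovering theorem: if $\Fh$ is a Nisnevich-local presheaf of spectra on $\SmS$, then its étale-hypersheafification --- the étale-fibrant replacement in the étale-local model structure on $\Spt(\SmS)$, before any $\A^{1}$-localization --- is computed on a smooth $S$-scheme $X$ by the filtered homotopy colimit $X\mapsto\hocolim_{\Ub\to X}\Fh(\Ub)$, where $\Fh(\Ub)$ denotes the totalization over $\Delta$ of the values of $\Fh$ on the terms of $\Ub$. This follows from the unstable Verdier hypercovering theorem of Dugger--Hollander--Isaksen for the étale topology on $\SmS$ by applying it levelwise to a fibrant $\Tb$-spectrum representing $\Fh$: filtered homotopy colimits of spectra are formed and detected levelwise and are exact, hence commute with the adjoint structure maps, with the loop functors governing the $\Omega$-spectrum condition, and with the totalizations over $\Delta$; the hypotheses that $X$ be smooth quasi-projective over a Noetherian base $S$ guarantee that the relevant hypercovers may be taken inside $\SmS$ and that the étale site is essentially small and well-behaved. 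Since $E$ is motivically fibrant, the presheaf of spectra $\underline{E}\colon Y\mapsto\map(\Sigma^{\infty}Y_{+},E)$ is Nisnevich-local and $\A^{1}$-invariant; applying the statement just described to $\Fh=\underline{E}$ identifies $\hocolim_{\Ub\to X}\map(\Sigma^{\infty}\Ub_{+},E)$ with $\underline{E}^{\et}(X)$, the value at $X$ of the étale-hypersheafification of $\underline{E}$, which by construction satisfies étale descent and receives an étale-local equivalence $\underline{E}\to\underline{E}^{\et}$.

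The step I expect to be the main obstacle is the reconciliation of the étale localization with the $\A^{1}$-localization: the hypercover colimit sees only the étale topology, so to conclude one must know that $\underline{E}^{\et}$ is \emph{still} $\A^{1}$-invariant, and therefore represents the étale motivic localization $L_{\et}E\in\SHh_{\et}(S)$ and not merely the étale-hyperlocal one; equivalently, one must show that étale hypersheafification carries the $\A^{1}$-invariant Nisnevich-local spectrum $\underline{E}$ to an $\A^{1}$-invariant spectrum. I would attack this through the hyperdescent spectral sequence $H^{p}_{\et}(X,a_{\et}\pi_{q}\underline{E})\Rightarrow\pi_{q-p}\underline{E}^{\et}(X)$, which reduces the question to the assertion that for an $\A^{1}$-invariant Nisnevich sheaf $\Gh$ on $\SmS$ the étale cohomology presheaves $H^{*}_{\et}(-,a_{\et}\Gh)$ are $\A^{1}$-invariant on smooth $S$-schemes --- a homotopy-invariance statement for étale cohomology in the spirit of the classical results for $\operatorname{Pic}$ and $\operatorname{Br}$, which holds in the present setting. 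Granting this, $\underline{E}^{\et}$ represents $L_{\et}E$ by the universal property of the étale motivic localization, so the displayed equivalence holds; combined with the reduction of the first paragraph and the evident functoriality of all the constructions in $X$, this proves the theorem.
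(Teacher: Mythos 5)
Your overall strategy is the one the paper itself follows: combine the Dugger--Hollander--Isaksen Verdier hypercovering theorem for the \'etale topology with the assertion that the \'etale localization of a Nisnevich $\A^1$-local object is still $\A^1$-local, and then use that $E^{m,n}$ and $E_{\et}^{m,n}$ are represented by the spaces $\Omega^{\infty}_{\Pro^1}\Sigma^{m,n}E$; the difference is that the paper runs this unstably, at the level of hom-sets (Theorem~\ref{verdier}, Proposition~\ref{verdierA1prop}), while you package it with mapping spectra. The decisive point, which you yourself single out as the main obstacle --- compatibility of the \'etale localization with the $\A^1$-localization --- is exactly the content of Lemma~\ref{lemmaA1local}, and in your proposal it is not proved but granted: you reduce it via the \'etale hyperdescent spectral sequence to homotopy invariance of \'etale cohomology with coefficients in the sheaves $a_{\et}\pi_q\underline{E}$ and assert that this ``holds in the present setting''. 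At the stated level of generality (an arbitrary Noetherian base $S$) it does not: in residue characteristic $p$, \'etale cohomology with $p$-torsion coefficients is not $\A^1$-invariant (Artin--Schreier covers of the affine line), and even for tame coefficients your spectral-sequence argument needs convergence, hence finiteness of \'etale cohomological dimension --- a hypothesis the theorem is explicitly meant to avoid (see the remark following Theorem~\ref{verdierE}). The paper closes this step by the short, purely model-categorical argument of Lemma~\ref{lemmaA1local}, with no descent spectral sequence and no cohomological-dimension input; in your write-up this key lemma is missing rather than proved by a different route.

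There is a second, more technical gap: your ``spectral Verdier hypercovering theorem'' is justified by letting filtered homotopy colimits commute ``with the totalizations over $\Delta$''. Filtered homotopy colimits of spectra commute with finite homotopy limits, not with $\holim_{\Delta}$, so this step fails as written; moreover, the claim that the one-step hypercover colimit computes the \'etale hypersheafification of $\underline{E}$ on the nose is stronger than what \cite{dhi} prove, which is the hom-set statement recorded as Theorem~\ref{verdier}. Neither is needed, since the theorem is a statement about groups: it suffices to apply Theorem~\ref{verdier}, together with the identification $\pi(\Ub,\Yh)\cong\Hom_{\Hh_{\Nis}(S)}(\Ub,\Yh)$ for Nisnevich motivic fibrant $\Yh$ and cofibrant $\Ub$, to each objectwise fibrant space $\Yh=\Omega^{\infty}_{\Pro^1}\Sigma^{m,n}E$, which represents $E^{m,n}$ in $\Hh_{\Nis}(S)$ and, after \'etale fibrant replacement, $E^{m,n}_{\et}$ in $\Hh_{\et}(S)$; this is precisely the paper's route via Proposition~\ref{verdierA1prop} and the proof of Theorem~\ref{verdierE}. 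If you repair these two points --- replace the colimit-versus-$\mathrm{Tot}$ argument by the levelwise one, and actually prove (or appropriately restrict the hypotheses for) the preservation of $\A^1$-locality under \'etale localization --- your stable packaging would go through.
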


Taking $E=H\Z$ and $S=\Spec(k)$ for a field $k$, it follows that 
the isomorphism (\ref{Hcolimintro}) holds for a smooth quasi-projective variety 
over a field, independent of further assumption such as, for example, finite 
cohomological dimension. 

The isomorphism (\ref{Hcolimintro}) has been used in \cite{rs} to construct a 
map from \'etale motivic to Deligne-Beilinson cohomology $c^{m,n}_{L,D}: 
H^m_L(X;Z(n))\rightarrow H^m_D(X;\Z(n))$, where Deligne-Beilinson cohomology is 
defined as the hypercohomology of a complex of Zariski sheaves \cite{EV}. If $X$ is 
projective, there is an isomorphism
\begin{equation}\label{db-deligne}
H^m_D(X;\Z(n))\cong H^m(X;\Z_D(n)),
\end{equation}
where the group on the right is the cohomology of the analytic Deligne complex $\Z_D(n)$, 
which is quasi-isomorphic to the homotopy pullback of the diagram of complexes of sheaves
arising from the inclusions $\Omega^{\geq n}_X \rightarrow \Omega^{\bullet}_X\leftarrow \Z$.  
In \cite{hfc} variants of Deligne cohomology theories have been constructed by replacing
the complex $\Z$ (which represents singular cohomology) with a spectrum representing a 
more general cohomology theory. In particular, this construction applied to the Thom
spectrum $MU$ yields the Hodge filtered cobordism groups $MU_{log}^m(n)(X)$ with the 
property that the map $MU\rightarrow H\Z$ induces natural homomorphisms 
$MU^m_{log}(n)(X)\rightarrow H^m(X;\Z_D(n))$. Since filtered Hodge cobordism is an oriented
motivic cohomology theory, the universal property of algebraic cobordism represented by 
the motivic spectrum $MGL$ yields maps
\begin{equation}\label{algcob-fil}
MGL^{m,n}(X) \rightarrow \MUlog^{m}(n)(X). 
\end{equation}

We use Theorem \ref{verdierE} to show the following. 

\begin{theorem}\label{thm-maps}
Let $X$ be a smooth quasi-projective complex variety and let $m,n$ be integers. 
Then there are natural homomorphisms 
\begin{equation}\label{thm-maps-1claim}
\MGLet^{m,n}(X)  \to \MUlog^{m}(n)(X)
\end{equation}
such that
$MGL^{m,n}(X) \rightarrow \MGLet^{m,n}(X) \rightarrow \MUlog^{m}(n)(X)$ 
coincides with (\ref{algcob-fil}). 
If $X$ is projective, the map $MGL \to H\Z$ induces a natural 
commutative diagram
\bq\label{natdiagram}
\xymatrix{
\MGLet^{m,n}(X) \ar[d] \ar[r] & \MUlog^{m}(n)(X) \ar[d] \\
H_L^m(X; \Z(n)) \ar[r] & H^m(X; \Z_D(n)).}
\eq
\end{theorem}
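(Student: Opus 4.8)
The plan is to deduce all of Theorem~\ref{thm-maps} from Theorem~\ref{verdierE} by running the colimit‑over‑hypercovers construction for $MGL$ and $MU_{\log}$ — and, in the projective case, for $H\Z$ and its Hodge filtered analogue — in parallel, exploiting that Hodge filtered cohomology is insensitive to \'etale hypercovers. To make this precise I would first record that the constructions of \cite{hfc} extend along $\Delta$ to simplicial smooth complex varieties (by the usual totalization), so that $\MUlog^{m}(n)(\Ub)$ and the Hodge filtered $H\Z$‑cohomology $H^{m}(\Ub;\Z_D(n))$ are defined for an \'etale hypercover $\Ub\to X$, and then that for such $\Ub$ the pullback maps $\MUlog^{m}(n)(X)\to\MUlog^{m}(n)(\Ub)$ and $H^{m}(X;\Z_D(n))\to H^{m}(\Ub;\Z_D(n))$ are isomorphisms. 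The point is that an \'etale morphism is a local isomorphism on the associated analytic spaces, so an \'etale hypercover of $X$ analytifies to a hypercover of $X^{\an}$ in the classical topology, while the theories of \cite{hfc} are by construction hypercomplete sheaves of spectra on the analytic site; hence the Hodge filtered complex cobordism spectrum of $X$ is the homotopy limit over $\Delta$ of its values on $\Ub$, and likewise for $\Z_D$. Since the category $\mathrm{HR}_{\et}(X)$ of \'etale hypercovers with simplicial homotopy classes of maps over $X$ is filtered, hence connected, and since the pullback maps above form a natural transformation out of a constant diagram on $\mathrm{HR}_{\et}(X)$, passing to colimits yields $\colim_{\Ub}\MUlog^{m}(n)(\Ub)\cong\MUlog^{m}(n)(X)$ and $\colim_{\Ub}H^{m}(\Ub;\Z_D(n))\cong H^{m}(X;\Z_D(n))$.

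Next I would construct (\ref{thm-maps-1claim}). By the universal property of $MGL$ the map (\ref{algcob-fil}) is a natural transformation of cohomology theories on $\Smc$, so it extends to simplicial objects and gives, for every $\Ub\in\mathrm{HR}_{\et}(X)$, a homomorphism $MGL^{m,n}(\Ub)\to\MUlog^{m}(n)(\Ub)$ compatible with morphisms of hypercovers. Taking $\colim_{\Ub}$ and inserting Theorem~\ref{verdierE} for $E=MGL$ on the source and the isomorphism above on the target produces
\[
\MGLet^{m,n}(X)\;\cong\;\colim_{\Ub}MGL^{m,n}(\Ub)\;\longrightarrow\;\colim_{\Ub}\MUlog^{m}(n)(\Ub)\;\cong\;\MUlog^{m}(n)(X),
\]
which is the required map. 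Via Theorem~\ref{verdierE} the localization map $MGL^{m,n}(X)\to\MGLet^{m,n}(X)$ is the structure map to the colimit from the vertex given by the trivial hypercover $X\to X$; evaluating the composite above at that vertex gives (\ref{algcob-fil}) for $X$ followed by the identity of $\MUlog^{m}(n)(X)$, because the isomorphism $\colim_{\Ub}\MUlog^{m}(n)(\Ub)\cong\MUlog^{m}(n)(X)$ is inverse to the structure map from the trivial vertex; hence $MGL^{m,n}(X)\to\MGLet^{m,n}(X)\to\MUlog^{m}(n)(X)$ coincides with (\ref{algcob-fil}).

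For the diagram (\ref{natdiagram}) with $X$ projective I would carry out the same argument for $E=H\Z$: the natural regulator from motivic to Hodge filtered $H\Z$‑cohomology provides, for each $\Ub\in\mathrm{HR}_{\et}(X)$, the bottom edge of a square
\[
\xymatrix{
MGL^{m,n}(\Ub) \ar[d] \ar[r] & \MUlog^{m}(n)(\Ub) \ar[d] \\
H^m_M(\Ub;\Z(n)) \ar[r] & H^m(\Ub;\Z_D(n)),
}
\]
whose top edge is (\ref{algcob-fil}) and whose verticals are induced by $MGL\to H\Z$ and by $MU\to H\Z$ through the construction of \cite{hfc}; this square commutes because that construction is functorial in the coefficient spectrum and compatible with orientations, so both composites $MGL^{m,n}(\Ub)\to H^m(\Ub;\Z_D(n))$ are the map determined by the universal property of $MGL$. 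Applying $\colim_{\Ub}$ and identifying the four corners — the left column by Theorem~\ref{verdierE} for $MGL$ and for $H\Z$, the right column by the hyperdescent of the first paragraph — gives a commutative square whose top edge is (\ref{thm-maps-1claim}), whose left and right edges are the maps induced by $MGL\to H\Z$ and by $MU\to H\Z$, and whose bottom edge is the colimit of the regulators. For $X$ projective this bottom edge is the map $c^{m,n}_{L,D}$ of \cite{rs}: that map is itself built in \cite{rs} from (\ref{Hcolimintro}) by assembling regulators over \'etale hypercovers, and its target is matched with $H^m(X;\Z_D(n))$ through (\ref{db-deligne}).

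The two steps that need genuine work are the \'etale hyperdescent of the Hodge filtered theories — where one must verify that the constructions of \cite{hfc} are hypercomplete sheaves on the analytic site and that an \'etale hypercover analytifies to a hypercover in the classical topology — and, for the projective case, the levelwise comparison identifying the homotopy‑theoretic regulator into Hodge filtered $H\Z$‑cohomology with the Deligne--Beilinson regulator of \cite{rs} (so that the assembled bottom edge is $c^{m,n}_{L,D}$), the projectivity hypothesis entering through (\ref{db-deligne}).
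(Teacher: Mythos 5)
Your overall architecture coincides with the paper's: obtain $MGL\to\MUlog$ from the universal property of $MGL$ (oriented theory), apply Theorem~\ref{verdierE} with $E=MGL$, and reduce everything to showing that $\MUlog^m(n)(X)\to\MUlog^m(n)(\Ub)$ is an isomorphism for every \'etale hypercover; the compatibility with \eqref{algcob-fil} via the trivial hypercover, and the identification of the bottom row of \eqref{natdiagram} through \eqref{Hcolimintro} and \eqref{db-deligne}, are also as in the paper. The gap is in the one step that carries the real content: your justification of \'etale hyperdescent for $\MUlog$ by asserting that ``the theories of \cite{hfc} are by construction hypercomplete sheaves of spectra on the analytic site.'' This is fine in spirit for two of the three corners of the defining homotopy pullback \eqref{MUlogdef} --- the objects $Rf_*MU$ and $Rf_*H(A^*(\pi_{2*}MU\otimes\C))$ satisfy descent for \'etale hypercovers because the topological realization of an \'etale hypercover is a topological hypercover and $\hocolim f^{-1}(\Ub)\to f^{-1}(X)$ is a weak equivalence (Dugger--Isaksen) --- but it fails as a strategy for the third corner, the logarithmic Hodge filtered piece $H(A^{n+*}_{\log}(\pi_{2*}MU\otimes\C))$. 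That presheaf is not analytically local: its value on a smooth quasi-projective $U$ is computed from logarithmic forms on a good compactification of $U$, with the filtration taken along the boundary divisor, precisely because the naive filtered de Rham complex of $U^{\an}$ gives the wrong (non-Deligne) Hodge filtration for open varieties. So there is no sheaf-of-spectra-on-the-analytic-site argument available; ``verifying hypercompleteness'' for this piece is not a verification one can carry out.

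What is actually needed at this point --- and what the paper does --- is a genuinely Hodge-theoretic input: choose good compactifications $U_n\subset X_n$ with normal crossing boundary, so that $\Xb$ is a smooth proper hypercover, and invoke Deligne's Hodge III ((8.1.19), (8.1.20), (8.3.3)): the Hodge filtration on the cohomology of the simplicial scheme induces that of $X$, the descent spectral sequence relating the cohomology of the $U_n$ to that of $\Ub$ is compatible with the Hodge filtration, and it degenerates at $E_2$ (strictness), so that descent for complex cohomology forces descent for each filtration level. Without this step (or an equivalent strictness/degeneration argument), your construction of \eqref{thm-maps-1claim} is incomplete, since the target identification $\colim_{\Ub}\MUlog^m(n)(\Ub)\cong\MUlog^m(n)(X)$ rests entirely on it. The remainder of your argument --- filteredness of the hypercover category, evaluation at the trivial vertex to recover \eqref{algcob-fil}, and the assembly of the commutative square \eqref{natdiagram} from the maps $MGL\to H\Z$ and $MU\to H\Z$ together with the fact that the bottom map of \cite{rs} is itself built by assembling regulators over \'etale hypercovers and comparing via \eqref{db-deligne} in the projective case --- is sound and matches the paper.
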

We remark that for a smooth projective complex variety the restriction of the 
map in the bottom row (\ref{natdiagram}) to torsion subgroups in an isomorphism, 
provided $m\neq 2n$ \cite[Theorem 1.2]{rs}. It is tempting to ask whether the 
restriction of the top row to torsion is an isomorphism as well, allowing to determine
the torsion in \'etale cobordism groups via filtered Hodge cobordism.

\section{Verdier's hypercovering theorem for motivic spectra}

\subsection{Preliminaries}
Let $\SmS$ be the category of smooth schemes over a Noetherian scheme $S$, and let 
$\Spc(S)$ be the category of simplicial presheaves on $\SmS$. Thus objects of
$\Spc(S)$ are contravariant functors from $\SmS$ to the category $\sS$ of simplicial 
sets, which we refer to as spaces (over $S$). 
Let $f \colon \Xh \to \Yh$ be a morphism of spaces. Then $f$ is called
\begin{itemize}[leftmargin=*]
\item
a projective weak equivalence, if it induces a weak equivalence of simplicial 
sets $\Xh(U) \to \Yh(U)$ for every object $U$ of $\SmS$;
\item 
a projective fibration, if it induces a Kan fibration of simplicial sets 
$\Xh(U) \to \Yh(U)$ for every object $U$ of $\Sm/S$;
\item
 a projective cofibration, if it has the right lifting property with respect 
to any acyclic projective fibration.
\end{itemize}
These classes of morphisms define a closed model structure on $\Spc(S)$, called the 
projective model structure (see \cite{dugger}).

We will consider $\SmS$ as a site with respect to a Grothendieck topology $\tau$. 
To obtain a model structure which is sensitive to the topology $\tau$, one needs 
to modify the above structure. We will consider the cases when $\tau=\Nis$ is the 
Nisnevich topology or $\tau=\et$ is the \'etale topology. Then $f \colon \Xh \to \Yh$ 
is called
\begin{itemize}[leftmargin=*]
\item a $\tau$-weak equivalence (or just weak equivalence), if it induces a weak 
equivalence of simplicial sets $\Xh_x \to \Yh_x$ at every $\tau$-point $x$ of the 
site $\SmS$;
\item a $\tau$-cofibration (or cofibration), if it is a projective cofibration;
\item a $\tau$-local projective fibration (or local projective fibration), if it 
has the right lifting property with respect to any projective cofibration which is 
also a weak equivalence.
\end{itemize}
These classes of morphisms define a closed proper cellular simplicial model structure on $\Spc(S)$, 
the local projective model structure (see \cite{dugger} and \cite[Theorem 2.3]{jardine} for the 
corresponding injective structure which is Quillen equivalent to the projective one). 
Since we will only use this projective structure, we will often omit the word `projective'. 
Let $\Hhstau(S)$ be the homotopy category of $\Spc(S)$, considered as a site with respect to 
$\tau$. The category $\Spcp(S)$ of pointed spaces over $S$ has a model structure via the 
forgetful functor $\Spcp(S) \to \Spc(S)$ and we write $\Hhpstau(S)$ for the corresponding
homotopy category. 

Dugger, Hollander and Isaksen \cite{dhi} have shown that one way to obtain the local projective model 
structure is to form the localization of the projective model structure with respect to the special 
class of morphisms called hypercovers. Since these hypercovers will play an essential role in this paper, 
we will recall their definition following the conventions used in \cite{dhi}: Given a topology $\tau$ 
on $\SmS$, a map $f$ of simplicial presheaves is called a stalkwise fibration (resp.~acyclic stalkwise 
fibration), if the map of stalks $f_x$ is a Kan fibration (resp.~Kan fibration and weak equivalence) of 
simplicial sets for every $\tau$-point $x$. Let $X$ be an object of $\SmS$ and let $\Ub$ be a simplicial 
presheaf, together with an augmentation map $\Ub \to X$ in $\Spc(S)$. This map is called a 
$\tau$-hypercover of $X$ if it is an acyclic stalkwise fibration and each $U_n$ is a coproduct 
of representables. Note that the projective model structure on $\Spc(S)$ has the property that every 
hypercover is a morphism of cofibrant objects \cite{dugger}.  
Moreover, by \cite{dhi} the fibrations in the local projective model structure on $\Spc(S)$ 
admit a characterization in terms of such hypercovers. 
Following \cite{dhi}, we say that 
a simplicial presheaf $\Yh$ satisfies descent for a hypercover $\Ub \to X$, if there is a projective 
fibrant replacement $\Yh \to \Yh'$ with the property that the natural map 
\bq\label{descentmap}
\Map(X, \Yh') \to \Map(\Ub, \Yh')
\eq
is a weak equivalence of simplicial sets, where $\Map$ denotes the mapping space in the simplicial 
structure on spaces. It is easy to see that if $\Yh$ satisfies descent for a hypercover $\Ub \to X$, 
then the map \eqref{descentmap} is a weak equivalence for {\it every} objectwise fibrant replacement 
$\Yh'$. Moreover, the local projective fibrant objects in $\Spc(S)$ are exactly those spaces which 
are projective fibrant and satisfy descent with respect to all hypercovers \cite[Corollary 7.1]{dhi}. 

\subsection{The classical case.} Let $\tau$ be either the Nisnevich or the \'etale topology on $\SmS$. 
For stalkwise fibrant spaces $\Xh$ and $\Yh$, simplicial homotopy of maps $\Xh \to \Yh$ is an equivalence 
relation. The set  $\pi(\Xh,\Yh)$ of simplicial homotopy classes of morphisms from $\Xh$ to $\Yh$ is 
the quotient of $\Hom_{\Spc(S)}(\Xh,\Yh)$ with respect to the equivalence relation generated by 
simplicial homotopies. For $X\in \SmS$,  we write $\pi HC_{\tau}/X$ for the category whose objects are 
the $\tau$-hypercovers of $X$ and whose morphisms are simplicial homotopy classes of morphisms which 
fit in the obvious commutative triangle over $X$. The category $\pi HC_{\tau}/X$ is filtered 
(see \cite[Proposition 8.5]{dhi}, for instance). A crucial observation, made first by Brown 
\cite[Proof of Theorem 2]{brown}, is that one can use $\pi HC_{\tau}/X$ to approximate the homotopy 
category $\Hhstau(S)$ in the following sense, yielding a generalization of the Verdier 
Hypercovering Theorem \cite[expos\'e V, 7.4.1(4)]{AGV} (see also \cite[Theorem 8.6]{dhi} and 
\cite{jardineverdier}):  

\begin{theorem}\label{verdier}
Let $\Yh$ be a stalkwise fibrant simplicial presheaf and let $X$ be an object in $\SmS$. Then the 
canonical map induces a bijection
\[
\colim_{\Ub \to X \in \pi HC_{\tau}/X} \pi(\Ub, \Yh) \overset{\cong}{\to} \Hom_{\Hhstau(S)}(X, \Yh). 
\]
\end{theorem}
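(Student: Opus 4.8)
The plan is to identify both sides of the asserted bijection with simplicial homotopy classes of maps into a \emph{local fibrant} model of $\Yh$, in the spirit of Brown \cite{brown} and Dugger--Hollander--Isaksen \cite{dhi}. Recall that the canonical map sends the class of $f\colon\Ub\to\Yh$ over a hypercover $\Ub\to X$ to the morphism $X\xleftarrow{\sim}\Ub\xrightarrow{f}\Yh$ in $\Hhstau(S)$; this makes sense because $\Ub\to X$, being an acyclic stalkwise fibration, is a local weak equivalence and hence invertible in $\Hhstau(S)$. First I would choose a local projective fibrant replacement $j\colon\Yh\to\Yh_f$. Since $X$ is representable, hence projective cofibrant, $\Yh_f$ is fibrant in the simplicial model category $\Spc(S)$, and $j$ is a local weak equivalence, there are natural bijections
\[
\Hom_{\Hhstau(S)}(X,\Yh)\;\cong\;\Hom_{\Hhstau(S)}(X,\Yh_f)\;\cong\;\pi_0\Map(X,\Yh_f)\;\cong\;\pi(X,\Yh_f),
\]
so it remains to produce a natural bijection $\colim_{\Ub\to X}\pi(\Ub,\Yh)\cong\pi(X,\Yh_f)$ compatible with the canonical map.

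Next I would compare, for each object $\Ub\to X$ of $\pi HC_{\tau}/X$, the restriction map $\Map(X,-)\to\Map(\Ub,-)$ and the map $j_{*}\colon\Map(\Ub,\Yh)\to\Map(\Ub,\Yh_f)$. Because $\Yh_f$ is local fibrant it satisfies descent for every hypercover by \cite[Corollary 7.1]{dhi}, so each $\Map(X,\Yh_f)\to\Map(\Ub,\Yh_f)$ is a weak equivalence of simplicial sets; since $\pi HC_{\tau}/X$ is filtered and hence all transition maps in the diagram $\Ub\mapsto\Map(\Ub,\Yh_f)$ are then weak equivalences, the canonical map $\Map(X,\Yh_f)\to\colim_{\Ub\to X}\Map(\Ub,\Yh_f)$ is a weak equivalence, in particular a bijection on $\pi_0$. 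On the other hand $\pi_0$ commutes with filtered colimits, and $\pi_0\Map(\Ub,\Yh)=\pi(\Ub,\Yh)$ because simplicial homotopy of maps into the stalkwise fibrant space $\Yh$ is an equivalence relation; hence $\pi_0\bigl(\colim_{\Ub\to X}\Map(\Ub,\Yh)\bigr)=\colim_{\Ub\to X}\pi(\Ub,\Yh)$. Consequently the theorem would follow once one shows that the map induced by $j$,
\[
\colim_{\Ub\to X}\Map(\Ub,\Yh)\;\longrightarrow\;\colim_{\Ub\to X}\Map(\Ub,\Yh_f),
\]
is a weak equivalence; a straightforward diagram chase then identifies the resulting bijection with the canonical map. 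Equivalently, one must show that the simplicial presheaf $W\mapsto\colim_{\Ub\to W}\Map(\Ub,\Yh)$, which receives a natural stalkwise weak equivalence from $\Yh$ because every hypercover is a local weak equivalence, is local fibrant; the substance of this is that it satisfies descent.

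The hard part will be exactly this last point, which amounts to a lifting property against hypercovers: (i) every map $g\colon\Ub\to\Yh_f$ from a hypercover $\Ub\to X$ becomes, after passing to a refinement $V_{\bullet}\to\Ub$, simplicially homotopic as a map $V_{\bullet}\to\Yh_f$ to $j\circ\tilde g$ for some $\tilde g\colon V_{\bullet}\to\Yh$; and (ii) two such lifts $\tilde g_0,\tilde g_1\colon\Ub\to\Yh$ with $j\tilde g_0\simeq j\tilde g_1$ become simplicially homotopic after restriction to a suitable refinement, with the analogous statement for homotopies between homotopies. Both are obtained by a stalkwise obstruction argument: since $\Yh$ and $\Yh_f$ are stalkwise fibrant and $j$ is a stalkwise weak equivalence, the required lifts and homotopies exist at every $\tau$-point of $\SmS$, and a sufficiently fine hypercover $V_{\bullet}$ realizes them on sections --- this is precisely the mechanism by which hypercovers convert local data into honest objectwise data, and is the technical heart of \cite[\S 8]{dhi} (see also \cite[Proof of Theorem 2]{brown} and \cite{jardineverdier}). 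One also has to check that the colimit construction is compatible with composition and base change of hypercovers, using the filteredness of $\pi HC_{\tau}/X$ recorded in \cite[Proposition 8.5]{dhi}, so that the descent maps for $W\mapsto\colim_{\Ub\to W}\Map(\Ub,\Yh)$ are indeed weak equivalences. Granting these, assembling the equivalences above yields the desired natural bijection.
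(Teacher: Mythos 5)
The paper does not actually prove this statement: Theorem \ref{verdier} is quoted as known, with references to Brown's proof of his Theorem 2, to \cite[Theorem 8.6]{dhi}, and to \cite{jardineverdier}. So your attempt can only be measured against those standard proofs, and your overall reduction is indeed the standard one: replace $\Yh$ by a local projective fibrant model $j\colon\Yh\to\Yh_f$, identify $\Hom_{\Hhstau(S)}(X,\Yh)$ with $\pi(X,\Yh_f)$ using that $X$ is representable (hence cofibrant), use descent of $\Yh_f$ for hypercovers and filteredness of $\pi HC_{\tau}/X$ to identify this with $\colim_{\Ub\to X}\pi(\Ub,\Yh_f)$, and reduce everything to the bijectivity of $\colim_{\Ub}\pi(\Ub,\Yh)\to\colim_{\Ub}\pi(\Ub,\Yh_f)$. (Two small technical points you should tidy: the space-level diagram $\Ub\mapsto\Map(\Ub,\Yh_f)$ is not strictly functorial on $\pi HC_{\tau}/X$, since morphisms there are only homotopy classes, so either index over honest hypercover maps or pass to $\pi_0$ before taking the colimit; and the ``equivalently, show $W\mapsto\colim_{\Ub\to W}\Map(\Ub,\Yh)$ is local fibrant'' aside is neither needed nor obviously equivalent, and imports extra functoriality problems.)

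The genuine gap is exactly where you say the hard part is. Your items (i) and (ii) --- that a map $\Ub\to\Yh_f$ lifts, up to simplicial homotopy and after refining $\Ub$ by another hypercover, to a map into $\Yh$, and that lifts which agree in $\Yh_f$ become homotopic after refinement --- are the entire content of the Verdier hypercovering theorem, and the justification you offer (``lifts and homotopies exist at every $\tau$-point, and a sufficiently fine hypercover realizes them on sections'') is not an argument. Stalkwise existence of a lift at each point gives, for each point separately, a lift over some neighborhood and only up to local homotopies that need not be compatible; producing a single refinement $V_{\bullet}\to\Ub$ in $\pi HC_{\tau}/X$ carrying an honest lift requires an inductive, degreewise (skeletal) construction: one first factors $j$ (e.g.\ via Brown's factorization in the category of locally fibrant objects, or a mapping-path construction) so as to reduce to lifting against a stalkwise acyclic fibration, and then uses the local right lifting property of stalkwise acyclic fibrations against cofibrations, refining the hypercover at each simplicial degree and controlling the choices already made --- this is precisely the machinery of \cite[\S 8]{dhi} (and of Brown's and Jardine's proofs). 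As written, your proof defers its only nontrivial step to the very results being proved; either carry out that induction or state plainly that you are invoking \cite[Theorem 8.6]{dhi}, which is what the paper itself does.
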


We apply Theorem \ref{verdier} to obtain a description of
$\Hom_{\Hhset(S)}(X, \Yh)$,~i.e. the set of maps between a smooth scheme $X$ over $S$ 
and a projective fibrant space $\Yh$ in the \'etale homotopy category $\Hhset(S)$. 
Since $\Yh$ is also stalkwise fibrant for the Nisnevich and the \'etale topology
on $\Sm/S$, we have from Theorem \ref{verdier}
\bq\label{verdierapplied}
\colim_{\Ub \to X \in \pi HC_{\et}/X} \pi(\Ub, \Yh) \overset{\cong}{\to} \Hom_{\Hhset(S)}(X, \Yh).
\eq

Let $\Yh$ be a fibrant object in the local Nisnevich model structure. Then $\Ub$ is a 
cofibrant object, and the set $\pi(\Ub, \Yh)$ of simplicial homotopy classes of maps is 
in bijection with the set of morphisms from $\Ub$ to $\Yh$ in the homotopy category 
associated with local Nisnevich model structure on $\Spc(S)_{\Nis}$. In particular, we 
obtain from \eqref{verdierapplied} the following bijection
\bq\label{verdierappliedNis}
\colim_{\Ub \to X \in \pi HC_{\et}/X} \Hom_{\HhsNis(S)}(\Ub, \Yh) \overset{\cong}{\to} 
\Hom_{\Hhset(S)}(X, \Yh).\eq

\subsection{A motivic variant.} We prove a motivic analogue of (\ref{verdierappliedNis}). 
Let $\Yh$ be a simplicial presheaf on $\SmS$. Recall that $\Yh$ is $\A^1$-local, if for
every object for every $X \in \SmS$ the projection $X\times_S \A_S^1 \to X$ induces a weak equivalence 
\bq\label{localmap}
\Map(X, \Yh) \to \Map(X\times_S \A_S^1, \Yh). 
\eq
If $\Yh$ is $\A^1$-local, then $\Yh$ is Nisnevich $\A^1$-local (resp.~\'etale $\A^1$-local), if 
$\Yh$ is Nisnevich local fibrant (resp.~\'etale local fibrant).

Since the motivic model structure is given by a left Bousfield localization with respect to the maps
$X\times_S \A^1_S \to X$ for all $X\in \SmS$, it follows that the Nisnevich $\A^1$-local objects 
(resp.~\'etale $\A^1$-local objects) are exactly the fibrant objects in the Nisnevich motivic 
structure (resp.~\'etale motivic model structure) in $\Spc(S)$. 
Let $\Hh_{Nis}(S)$ (resp.~$\Hh_{\et}(S)$) be the motivic homotopy category of spaces with respect to 
the Nisnevich topology (resp. \'etale topology). 

\begin{lemma}\label{lemmaA1local}
Let $\Yh$ be a simplicial presheaf on $\SmS$ which is Nisnevich $\A^1$-local. Then a fibrant replacement 
of $\Yh$ in the \'etale local model structure is an \'etale-$\A^1$-local simplicial presheaf. In particular, 
for $X\in \SmS$ we have
\bq\label{HomNistoet}
\Hom_{\Hhset(S)}(X, \Yh) \xrightarrow{\cong} \Hom_{\Hh_{\et}(S)}(X, \Yh).  
\eq
\end{lemma}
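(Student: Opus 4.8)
The plan is to show that the $\A^1$-locality condition survives passage from the local Nisnevich model structure to the local \'etale model structure, using the fact that the relevant mapping spaces only get "more refined" and that the $\A^1$-homotopies we need are already present objectwise. Concretely, let $\Yh$ be Nisnevich $\A^1$-local, and choose an \'etale local fibrant replacement $\Yh \to \Yh'$. Since every \'etale local weak equivalence is in particular a Nisnevich local weak equivalence (the \'etale topology is finer, so it has more points, hence a finer notion of stalkwise equivalence — wait, the implication goes the other way: \'etale-local equivalence implies stalkwise \'etale equivalence, which since \'etale points refine Nisnevich points does imply Nisnevich-local equivalence), the map $\Yh \to \Yh'$ is also a Nisnevich local weak equivalence. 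Hence $\Yh'$ is a Nisnevich local fibrant object weakly equivalent to $\Yh$ in the Nisnevich local structure, so $\Yh'$ is again Nisnevich $\A^1$-local: for each $X \in \SmS$ the projection induces a weak equivalence $\Map(X,\Yh') \to \Map(X\times_S\A^1_S,\Yh')$.

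Next I would upgrade this from "Nisnevich $\A^1$-local" to "\'etale $\A^1$-local", i.e. show that $\Yh'$ is $\A^1$-local as a simplicial presheaf (the condition \eqref{localmap} is topology-independent) and \'etale local fibrant (which holds by construction). The point is that condition \eqref{localmap} for $\Yh'$ is exactly what we just verified, since $\Map(X,-)$ in the statement of Nisnevich $\A^1$-locality and in the statement of $\A^1$-locality is the same simplicial mapping space, computed on an objectwise (equivalently, in either case, suitably fibrant) model. So $\Yh'$ satisfies \eqref{localmap} for all $X$, and is \'etale local fibrant, hence by the displayed characterization just above the lemma it is an \'etale $\A^1$-local object, i.e. fibrant in the \'etale motivic model structure.

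Finally, the claimed isomorphism \eqref{HomNistoet} follows formally: in the \'etale local homotopy category $\Hhset(S)$ the object $\Yh$ is identified with its fibrant replacement $\Yh'$, and since $\Yh'$ is also fibrant in the \'etale motivic structure $\Hh_\et(S)$, which is a left Bousfield localization of the \'etale local structure, we get
\[
\Hom_{\Hhset(S)}(X,\Yh) \cong \Hom_{\Hhset(S)}(X,\Yh') \cong \Hom_{\Hh_\et(S)}(X,\Yh') \cong \Hom_{\Hh_\et(S)}(X,\Yh),
\]
where the middle isomorphism is the general fact that for a left Bousfield localization, maps out of any object into a local (= motivically fibrant) object are unchanged, and $X$ is cofibrant in both structures.

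The step I expect to be the main obstacle is the direction-of-implication bookkeeping between the Nisnevich and \'etale structures: one has to be careful that an \'etale local weak equivalence really does restrict to a Nisnevich local weak equivalence (so that $\Yh'$ stays a valid model for $\Yh$ on the Nisnevich side and inherits $\A^1$-locality), and conversely that $\Yh'$ being \'etale local fibrant is compatible with the objectwise computation of $\Map$ used in \eqref{localmap}. Once the comparison of the two local structures is set up correctly — using that \'etale points refine Nisnevich points, so stalkwise \'etale equivalence implies stalkwise Nisnevich equivalence — the rest is a formal manipulation of Bousfield localizations and fibrant replacements.
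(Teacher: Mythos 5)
Your argument hinges on the claim that the \'etale local fibrant replacement $\Yh \to \Yh'$ is also a Nisnevich local weak equivalence, so that $\Yh'$ ``stays a valid model for $\Yh$ on the Nisnevich side'' and inherits Nisnevich $\A^1$-locality. This is exactly backwards, and it is the fatal step. Since the \'etale topology is finer, the \'etale local model structure is a further left Bousfield localization of the Nisnevich local one: every Nisnevich local weak equivalence is an \'etale local weak equivalence, but not conversely. A map can be a weak equivalence on all \'etale stalks (strictly henselian local rings) while failing to be one on Nisnevich stalks (henselian local rings whose residue fields need not be separably closed); for instance, for a nontrivial connected finite \'etale cover $U \to X$, which is not a Nisnevich cover, the \v{C}ech nerve $\check{C}(U) \to X$ is an \'etale local weak equivalence but not a Nisnevich one. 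If your claim held, then for every Nisnevich fibrant $\Yh$ the hom-sets out of smooth schemes in the Nisnevich and \'etale local homotopy categories would coincide, forcing Nisnevich and \'etale cohomology to agree --- false, and contrary to the whole point of the paper. So the deduction ``hence $\Yh'$ is again Nisnevich $\A^1$-local'' is unjustified, and with it your verification of \eqref{localmap} for $\Yh'$.

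The paper's proof avoids this comparison of weak-equivalence classes altogether. It chooses the replacement $q \colon \Yh \to R_{\et}\Yh$ to be an acyclic cofibration in the \'etale local structure and works directly with the square of simplicial mapping spaces whose rows are $\Map(X,-) \to \Map(X\times_S \A^1_S,-)$ evaluated at $\Yh$ and at $R_{\et}\Yh$: the top row is a weak equivalence because $\Yh$ is Nisnevich $\A^1$-local, the two vertical maps are weak equivalences because $q$ is an acyclic cofibration and the sources are cofibrant, and two-out-of-three then gives \eqref{localmap} for $R_{\et}\Yh$, which is \'etale local fibrant by construction. Nowhere is it claimed that $q$ is a Nisnevich local equivalence. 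Your concluding Bousfield-localization bookkeeping for \eqref{HomNistoet} is fine and essentially matches the paper's second step (the paper phrases it via simplicial homotopy classes $\pi(X, R_{\et}\Yh)$ computing both hom-sets), but it only becomes available once \'etale $\A^1$-locality of the replacement has been established by an argument of the above kind, not by the incorrect inclusion of local weak equivalences.
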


\begin{proof}
Let $X \in \SmS$ and let $q \colon \Yh \to R_{\et}\Yh$ be an acyclic cofibration in the 
\'etale local model structure with the property that $R_{\et}\Yh$ is \'etale local fibrant. 
Then $q$ induces the following commutative diagram
\[
\xymatrix{
\Map(X, \Yh) \ar[d] \ar[r] & \Map(X\times_S \A_S^1, \Yh) \ar[d] \\
\Map(X, R_{et}\Yh) \ar[r] & \Map(X\times_S \A_S^1, R_{et}\Yh).}
\]
By assumption $\Yh$ is Nisnevich $\A^1$-local, hence the top horizontal map is 
a weak equivalence. Since $q$ is an acyclic cofibration and all objects are cofibrant, 
we also know that the two vertical maps are weak equivalences. Hence the lower horizontal 
map is a weak equivalence as well, and $R_{et}\Yh$ is \'etale-$\A^1$-local. 
For the second assertion note that since $R_{\et}\Yh$ is \'etale $\A^1$-local, 
the diagonal maps in the commutative diagram
\[
\xymatrix{
 & \ar[dl]_{\cong} \pi(X, R_{\et}\Yh) \ar[dr]^{\cong} & \\
\Hom_{\Hhset(S)}(X, \Yh) \ar[rr] & & \Hom_{\Hh_{\et}(S)}(X, \Yh).}
\]
are bijections. Thus the bottom row is a bijection, which proves \eqref{HomNistoet}.
\end{proof}

The next Proposition gives the motivic analogue of (\ref{verdierappliedNis}): 

\begin{prop}\label{verdierA1prop}
Let $X \in \SmS$ and let $\Yh$ be a simplicial presheaf which is Nisnevich-$\A^1$-local. Then the natural map
induces a bijection
\bq\label{verdierappliedNisA1}
\colim_{\Ub \to X \in \pi HC_{\et}/X} \Hom_{\Hh_{\Nis}(S)}(\Ub, \Yh) \overset{\cong}{\to} 
\Hom_{\Hh_{\et}(S)}(X, \Yh). 
\eq
\end{prop}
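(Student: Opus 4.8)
The plan is to combine the two previous results in the obvious way, using Theorem \ref{verdier} (the classical Verdier Hypercovering Theorem for the \'etale topology) as the source of the hypercover colimit, and Lemma \ref{lemmaA1local} to pass from the \'etale \emph{local} homotopy category to the \'etale \emph{motivic} homotopy category on the right-hand side. First I would fix a fibrant replacement $q\colon \Yh \to R_{\Nis}\Yh$ in the local Nisnevich model structure; since $\Yh$ is Nisnevich-$\A^1$-local, $R_{\Nis}\Yh$ is a Nisnevich $\A^1$-local simplicial presheaf, i.e.\ fibrant in the Nisnevich motivic model structure, and $q$ is an $\A^1$-local weak equivalence (indeed a Nisnevich-local weak equivalence), so $\Hom_{\Hh_{\Nis}(S)}(\Ub,\Yh) = \Hom_{\Hh_{\Nis}(S)}(\Ub, R_{\Nis}\Yh)$ for every cofibrant $\Ub$, and in particular this holds for the hypercovers $\Ub \to X$, which are cofibrant. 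Replacing $\Yh$ by $R_{\Nis}\Yh$ thus reduces the statement to the case where $\Yh$ is Nisnevich local fibrant \emph{and} Nisnevich $\A^1$-local; I keep the name $\Yh$ for this replacement.

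Next I would identify the left-hand side with the colimit of simplicial homotopy classes of maps. Since $\Yh$ is now Nisnevich local fibrant, it is in particular stalkwise fibrant for both the Nisnevich and the \'etale topology, and for each \'etale hypercover $\Ub \to X$ the cofibrant object $\Ub$ satisfies $\pi(\Ub,\Yh) = \Hom_{\HhsNis(S)}(\Ub,\Yh)$. Moreover, because $\Yh$ is Nisnevich $\A^1$-local, the map $\Ub \to \Ub$ (compared against the $\A^1$-localization) shows $\Hom_{\HhsNis(S)}(\Ub,\Yh) = \Hom_{\Hh_{\Nis}(S)}(\Ub,\Yh)$: no further $\A^1$-fibrant replacement changes the mapping set out of a cofibrant object into an already $\A^1$-local object. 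So the left-hand side of \eqref{verdierappliedNisA1} is canonically
\[
\colim_{\Ub \to X \in \pi HC_{\et}/X} \pi(\Ub, \Yh),
\]
which by Theorem \ref{verdier} applied with $\tau = \et$ (and the stalkwise-fibrant $\Yh$) is canonically isomorphic to $\Hom_{\Hhset(S)}(X,\Yh)$.

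Finally I would apply Lemma \ref{lemmaA1local}: since $\Yh$ is Nisnevich $\A^1$-local, that lemma gives the natural bijection $\Hom_{\Hhset(S)}(X,\Yh) \xrightarrow{\cong} \Hom_{\Hh_{\et}(S)}(X,\Yh)$, which is the right-hand side of \eqref{verdierappliedNisA1}. Composing the three identifications yields the asserted bijection, and naturality in $X$ follows because each step is induced by a canonical map (the augmentation $\Ub \to X$, the unit of fibrant replacement, and the localization functor), so there is nothing extra to check. I do not expect a serious obstacle here; the only point demanding care is the bookkeeping that the fibrant replacement $R_{\Nis}\Yh$ is still stalkwise fibrant for the \'etale topology (true, since Nisnevich-local fibrant implies it has the right lifting property against more acyclic cofibrations than needed, hence is a fortiori \'etale stalkwise fibrant in the sense of having Kan stalks), so that Theorem \ref{verdier} genuinely applies. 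The substitutions between $\pi(-,-)$, $\Hom_{\HhsNis(S)}(-,-)$ and $\Hom_{\Hh_{\Nis}(S)}(-,-)$ for cofibrant source and suitably fibrant target are routine, and the colimit over the filtered category $\pi HC_{\et}/X$ commutes with all of them since it is filtered.
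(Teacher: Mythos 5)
Your argument is correct and is essentially the paper's own proof: apply Theorem \ref{verdier} to \'etale hypercovers with the stalkwise fibrant $\Yh$, identify $\pi(\Ub,\Yh)\cong\Hom_{\Hh_{\Nis}(S)}(\Ub,\Yh)$ using that all objects are cofibrant and $\Yh$ is fibrant in the Nisnevich motivic structure, and conclude with Lemma \ref{lemmaA1local} to pass from $\Hhset(S)$ to $\Hh_{\et}(S)$. The only deviation is your initial fibrant replacement $R_{\Nis}\Yh$, which is redundant here because the paper's definition of Nisnevich-$\A^1$-local already includes Nisnevich local fibrancy (hence objectwise, hence stalkwise fibrancy for both topologies).
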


\begin{proof}
Because $\Yh$ is an objectwise fibrant simplicial presheaf, by Theorem \ref{verdier}
\[
\colim_{\Ub \to X \in \pi HC_{\et}/X} \pi(\Ub, \Yh) \overset{\cong}{\to} \Hom_{\Hhset(S)}(X, \Yh). 
\]
Since $\Yh$ is $\A^1$-local and Nisnevich local fibrant, it is a fibrant object in the Nisnevich 
motivic local model structure on $\Spc(S)$. Since all objects in this structure are cofibrant, 
the set of simplicial homotopy classes $\pi(\Ub, \Yh)$ computes the set of morphisms in the 
motivic Nisnevich homotopy category,~i.e.
\[
\pi(\Ub, \Yh) \cong \Hom_{\Hh_{\Nis}(S)}(\Ub, \Yh).
\]
Again, since $\Yh$ is Nisnevich-$\A^1$-local, we have from Lemma \ref{lemmaA1local} 
a bijection
\[
\Hom_{\Hhset(S)}(X, \Yh) \xrightarrow{\cong} \Hom_{\Hh_{\et}(S)}(X, \Yh); 
\]
this proves the assertion.
\end{proof}

\subsection{Motivic $E$-cohomology groups} We use Proposition \ref{verdierA1prop} 
to prove the Verdier Hypercovering Theorem \ref{verdierE} for cohomology theories arising 
from motivic spectra. Let $\Pro^1$ be the projective line over $S$ pointed at $\infty$. 
Recall that a motivic or $\Pro^1$-spectrum over $S$ is a sequence 
$
E=(E_0, E_1, \ldots )
$ 
of pointed spaces  $E_n\in \Spcp(S)$, together with bonding maps
$
\sigma_n \colon E_n \wedge \Pro^1 \to E_{n+1}
$
in $\Spcp(S)$. 
A morphism $f \colon E \to F$ of $\Pro^1$-spectra is a sequence of maps 
$f_n \colon E_n \to F_n$ in $\Spcp(S)$ which commute with the bonding maps. We write $\Spt(S)$ 
for the category of motivic spectra. 

Given an object $\Xh\in \Spcp(S)$, one can associate to $\Xh$ its motivic suspension 
spectrum, which is given by the sequence of pointed spaces
\[
\Sigma^{\infty}_{\Pro^1}(\Xh):= (\Xh, \Xh \wedge \Pro^1, \ldots, \Xh \wedge (\Pro^1)^{\wedge n}, \ldots )
\]
together with the identity maps as bonding maps. This suspension yields 
a functor $\Sigma^{\infty}_{\Pro^1} \colon \Spcp(S) \to \Spt(S)$, which has 
a right adjoint $\Omega_{\Pro^1}^{\infty} \colon \Spt(S) \to \Spcp(S)$. 
Starting with a model structure on spaces, one obtains via a formal process a stable 
model structure on $\Spt(S)$ such that suspension with $\Pro^1$ induces an equivalences 
of categories. If we equip $\Spcp(S)$ with the Nisnevich (resp.~\'etale) motivic model 
structure, we obtain the stable Nisnevich (resp.~\'etale) model structure on $\Spt(S)$. 
Let $\SHh_{Nis}(S)$ (resp.~$\SHh_{\et}(S)$) be the corresponding Nisnevich (resp.~\'etale)
stable motivic homotopy category. Then the above pair of adjoint functors becomes 
a Quillen pair of adjoint functors 
\[
\Sigma_{\Pro^1}^{\infty} \colon \Spcp(S) \rightleftarrows \Spt(S) \colon \Omega_{\Pro^1}^{\infty}. 
\]
 
Recall that there are other suspension operators which play a role in the definition of 
generalized motivic cohomology groups. For example, if $K$ is a simplicial set, considered 
as a constant presheaf, then $K$ defines a space in $\Spc(S)$ (also denoted by $K$). For 
$K=S^1$ the simplicial circle, one defines a simplicial suspension operator 
$\Sigma_s \colon \Spcp(S) \to \Spcp(S)$ by the formula
\[
\Xh \mapsto S^1 \wedge \Xh.
\]
Also, for $\Ge_m = \A^1-\{0\}$ over $S$ pointed at $1$, one sets $\Sigma_{\Ge_m}\Xh= \Ge_m \wedge \Xh$,
and given integers $m\ge n$ one defines the motivic sphere $S^{m,n}\in \Spc(S)$ by
\[
S^{m,n}= \Sigma_s^{m-n}\Sigma_{\Ge_m}^n(S^0). 
\]
These three suspension operators are related in $\Hhp(S)$ by the isomorphisms
\[
\Pro^1 \cong S^1 \wedge \Ge_m = S^{2,1},
\]
which show that the suspensions $\Sigma_s$ and $\Sigma_{\Ge_m}$ become invertible in the 
stable motivic homotopy category. Thus it makes sense to define $S^{m,n}$ for all integers 
$m,n$; we write $E \mapsto \Sigma^{m,n}E$ for the induced operator on spectra. 

If $\Xh$ is a space, let $\Xh_+$ be the pointed space obtained by attaching a disjoint 
base point. Given a $\Pro^1$-spectrum $E$, the motivic (or Nisnevich motivic) 
$E$-cohomology groups of the space $\Xh$ with respect to $E$ are defined as the groups
\[
E^{m,n}(\Xh) = \Hom_{\SHh_{\Nis}(S)}(\Sigma_{\Pro^1}^{\infty}(\Xh_+), \Sigma^{m,n}E),
\]
Analogously, the \'etale motivic $E$-cohomology groups of $\Xh$ are given by
\[
E_{\et}^{m,n}(\Xh) = \Hom_{\SHh_{\et}(S)}(\Sigma_{\Pro^1}^{\infty}(\Xh_+), \Sigma^{m,n}E).
\]

We now prove Theorem \ref{verdierE}. 

\begin{proof}(of Theorem \ref{verdierE})
Let $E$ be a $\Pro^1$-spectrum which is fibrant in the Nisnevich stable motivic model structure 
on $\Spt(S)$. Being fibrant in the Nisnevich (resp.~\'etale) stable model 
structure means that $\Eh$ consists of Nisnevich (resp.~\'etale) $\A^1$-local 
spaces $E_n$ such that if ${\mathcal Hom}$ is the internal function object, 
the bonding maps induce weak equivalences 
\[
E_n \to {\mathcal Hom}(\Pro^1, E_{n+1}). 
\]
By Lemma \ref{lemmaA1local} finding a fibrant replacement of $E$ in the 
\'etale stable motivic model structure on $\Spt(S)$ only requires to take functorial fibrant 
replacements of the spaces $E_n$ in the \'etale local model structure on $\Spcp(S)$. 
Let $\Eh(n)[m]$ be the Nisnevich $\A^1$-local space $\Omega_{\Pro^1}^{\infty}(\Sigma^{m,n}E)$. Then 
$\Eh(n)[m]$ represents $E$-cohomology in $\Hh_{\Nis}(S)$,~i.e.~for every $\Xh$ we have for the 
$E$-cohomology groups
\[
E^{m,n}(\Xh) = \Hom_{\Hh_{\Nis}(S)}(\Xh, \Eh(n)[m]). 
\]
By the previous remark on fibrant spectra, we know that taking a functorial fibrant replacement 
of the spaces $E_n$ in the \'etale local model structure on $\Spcp(S)$ yields a fibrant replacement 
in in the stable \'etale motivic model structure of $\Eh$, and hence also a fibrant replacement 
of $\Eh(n)[m]$. This implies that $\Eh(n)[m]$ also represents the \'etale $E$-cohomology groups 
in $\Hh_{\et}(S)$,~i.e.
\[
E_{\et}^{m,n}(\Xh) = \Hom_{\Hh_{\et}(S)}(\Xh, \Eh(n)[m]).
\]
The Theorem follows now from Proposition \ref{verdierA1prop}, applied with $\Yh=\Eh(n)[m]$. 
\end{proof}

\begin{remark}
We remark that Theorem \ref{verdierE} does not state that motivic $E$-cohomology satisfies \'etale 
descent in the sense of Thomason \cite{thomason}. One can formulate such an \'etale descent 
statement for a motivic spectrum $E$ as follows: Let $\alpha$ be the change of topology morphism 
from the \'etale to the Nisnevich site. There is a pair of adjoint functors
\[
\alpha^* \colon \SHh_{\Nis}(S) \rightleftarrows \SHh_{\et}(S) \colon R_{\et}\alpha_*,
\]
and a motivic spectrum $E \in \SHh_{\Nis}(S)$ satisfies \'etale descent, if the adjunction
\bq\label{honestdescent}
\Eh \to R_{\et}\alpha_*\alpha^*\Eh
\eq
is an equivalence. Note that \eqref{honestdescent} is not an equivalence in general; for example, 
algebraic $K$-theory with finite coefficients satisfies \'etale descent only after 
inverting a Bott element, see \cite{thomason} and 
\cite{bott}. 
\end{remark}

\section{\'Etale algebraic and Hodge filtered cobordism}

In this section, we let $S=\Spec(\C)$ be the spectrum of the field $\C$ of complex 
numbers. We use Theorem \ref{verdierE} to construct maps from \'etale algebraic cobordism
(represented by Voevodsky's motivic Thom spectrum $MGL$ \cite{voevodsky}) to Hodge filtered 
cobordism (represented by the spectrum $\MUlog$ \cite{hfc}). Since the construction of 
$\MUlog$ is rather technical, we will only briefly introduce the properties needed for 
the proof below; for details we refer the reader to \cite{hfc}. 

Let $S^1$ be the simplicial circle, viewed as a constant presheaf, and let $\Spt_s(\C)$ be 
the category of $S^1$-spectra in $\Smc$. Thus objects of $\Spt_s(\C)$ are sequences  
$F=(F_0, F_1, \ldots )$ of pointed spaces $F_n$, together with bonding maps
$F_n \wedge S^1 \to F_{n+1}$ in $\Spc_*(\C)$. 
We consider $\Spc(\C)$ with the Nisnevich local model structure and denote by 
$\SHh_{s, \Nis}(\Smc)$ the homotopy category of the induced stable model structure. 

Given an integer $n$, we have in the category $\Spt_s(\Smc)$ morphisms
\begin{equation}\label{MUlogdef}
H(A^{n+*}_{\log}(\pi_{2*}MU\otimes \C)) \rightarrow Rf_*H(A^*(\pi_{2*}MU\otimes \C))\leftarrow Rf_*MU
\end{equation}
and the $S^1$-spectrum $\MUlog(n)$ is defined as the homotopy pullback resulting from these data. 
By construction, suitable suspensions of the objects $Rf_*MU$, $Rf_*H(A^*(\pi_{2*}MU\otimes \C))$
and  $H(A^{n+*}_{\log}(\pi_{2*}MU\otimes \C))$ represent in $\SHh_{s, \Nis}(\C)$ 
complex cobordism, singular cohomology and certain levels of the 
Hodge filtration respectively. The wedge of the spectra $\MUlog(n)$ for all integers $n$ 
defines a spectrum $\MUlog$ in $\Spt_s(\C)$ which represents (logarithmic) Hodge filtered cobordism 
in $\SHh_{s, \Nis}(\C)$. By \cite[Theorem 7.6 and Proposition 7.9]{hfc}, Hodge filtered cobordism 
is an oriented motivic cohomology theory on $\Smc$ and is represented by a $\Pro^1$-spectrum 
in the stable Nisnevich motivic homotopy category, which we also denote by $\MUlog$. The 
motivic Hodge filtered cobordism groups of a space $\Xh \in \Spc(\C)$ are the groups represented 
by this spectrum
\[
\MUlog^m(n)(\Xh) = \Hom_{\SHh_{\Nis}(\C)}(\Sigma_{\Pro^1}^{\infty}(\Xh_+), \Sigma^{m,n}\MUlog). 
\]

We prove Theorem \ref{thm-maps}.

\begin{proof}(of Theorem \ref{thm-maps}) 
Since $\MUlog$ is an oriented motivic cohomology theory \cite[Proposition 7.9]{hfc}, 
it follows from the universal property of algebraic cobordism \cite[Theorem 1.1]{ppr} 
that there is a canonical map in the motivic stable category
\[
MGL \to \MUlog. 
\]
In particular, given an \'etale hypercover $\Ub\rightarrow X$, we have natural maps 
\bq\label{mgltomulogmap}
MGL^{m,n}(\Ub)  \to \MUlog^{m}(n)(\Ub).
\eq
Taking the colimit over all such hypercovers, Theorem \ref{verdierE} yields the map
\bq\label{mlgtomulogmapcolim}
\MGLet^{m,n}(X) \cong \colim_{\Ub \to X}MGL^{m,n}(\Ub)  \to \colim_{\Ub \to X}\MUlog^{m}(n)(\Ub). 
\eq
Thus we get maps as in (\ref{thm-maps-1claim}), provided Hodge filtered cobordism satisfies 
\'etale descent,~i.e.~for every \'etale hypercover $\Ub\rightarrow X$ we have an isomorphism
$$
\MUlog^{m}(n)(\Ub) \cong \MUlog^m(n)(X).$$
In order to show this, it suffices to show that each of the objects appearing in \eqref{MUlogdef} 
satisfies \'etale descent. Note that the topological realization functor sends an \'etale 
hypercover $\Ub \to X$ to a topological hypercover $f^{-1}(\Ub) \to f^{-1}(X)$. By 
\cite[Proposition 4.10 and Theorem 5.2]{di}, this map induces a weak equivalence of 
topological spaces 
\[
\hocolim f^{-1}(\Ub) \overset{\sim}{\to} f^{-1}(X),
\] 
which shows that the two objects representing complex cobordism and complex cohomology 
satisfy \'etale descent. It remains to check the Hodge filtered part of cohomology. 
For each component $U_n$ of $\Ub$ let $U_n\rightarrow X_n$ be a smooth compactification  
such that $Y_n=X_n\setminus U_n$ is a normal crossing divisor. The resulting simplicial 
scheme $\Xb$ is a smooth proper hypercover of $X$, and as described in \cite[(8.1.19), (8.1.20), 
and (8.3.3)]{hodge3}, the Hodge filtration on the cohomology of the simplicial scheme $\Ub$ 
induces the Hodge filtration on the cohomology of $X$. Moreover, the spectral sequence 
which relates the cohomology of the components $U_n$ with the cohomology of $\Ub$ is compatible with 
the Hodge filtration. Since cohomology with complex coefficients satisfies \'etale descent, 
this spectral sequence abuts to the complex cohomology of $X$ and degenerates at the $E_2$-term. 
Hence $\hocolim f^{-1}(\Ub) \to f^{-1}(X)$ also induces an isomorphism on Hodge filtered 
cohomology groups, which completes the construction of the maps in (\ref{thm-maps-1claim}). 
It is clear that these maps extend the maps from (\ref{algcob-fil}). 

The diagram \eqref{natdiagram} is induced by the map of motivic spectra $MGL \to H\Z$ and 
the fact that the complex realization $f^{-1}$ of this map in the topological stable 
homotopy category is equal to $MU \to H\Z$. Moreover, it has been shown in \cite{hfc} 
that the map $MU \to H\Z$ induces the indicated map from Hodge filtered cobordism to 
Deligne cohomology. 
The commutativity of diagram \eqref{natdiagram} follows from the universality of $MGL$ 
in the stable motivic homotopy category and the fact that the horizontal maps in 
\eqref{natdiagram} are defined via the colimit of $MGL^{m,n}(\Ub)$ for all \'etale 
hypercovers $\Ub \to X$, together with the isomorphism (\ref{db-deligne}). 
\end{proof}

%
%
%
\bibliographystyle{amsalpha}

\end{document}